\documentclass{amsart}
\usepackage{graphicx}

\newtheorem{theorem}[subsection]{Theorem}

\newtheorem{corollary}[subsection]{Corollary}

\newtheorem*{fact}{Theorem}

\theoremstyle{definition}

\theoremstyle{remark}

\title{Minimal coloring number on minimal diagrams for $\mathbb{Z}$-colorable links}

\author{Kazuhiro Ichihara}
\address{Department of Mathematics, 
College of Humanities and Sciences, Nihon University,
3-25-40 Sakurajosui, Setagaya-ku, Tokyo 156-8550, Japan}
\email{ichihara@math.chs.nihon-u.ac.jp}

\author{Eri Matsudo}
\address{Graduate School of Integrated Basic Sciences, Nihon University,
3-25-40 Sakurajosui, Setagaya-ku, Tokyo 156-8550, Japan}
\email{cher16001@g.nihon-u.ac.jp}

\subjclass[2010]{57M25}
\keywords{$\mathbb{Z}$-coloring, link, minimal diagram}
\date{\today}

\begin{document}

\maketitle


\begin{abstract}
It was shown that any $\mathbb{Z}$-colorable link has a diagram which admits a non-trivial $\mathbb{Z}$-coloring with at most four colors. 
In this paper, we consider minimal numbers of colors for non-trivial $\mathbb{Z}$-colorings on minimal diagrams of $\mathbb{Z}$-colorable links. 
We show, for any positive integer $N$, there exists a $\mathbb{Z}$-colorable link and its minimal diagram such that any $\mathbb{Z}$-coloring on the diagram has at least $N$ colors. 
On the other hand, it is shown that certain $\mathbb{Z}$-colorable torus links have minimal diagrams admitting $\mathbb{Z}$-colorings with only four colors. 
\end{abstract}


\section{Introduction}

One of the most well-known invariants of knots and links would be the Fox $n$-coloring for an integer $n \ge 2$. 
For example, the tricolorability is much often used to prove that the trefoil is non-trivial. 

Some of links are known to admit a non-trivial Fox $n$-coloring for any $n \ge 2$.
A particular class of such links is the links with 0 determinants. 
(See \cite{IchiharaMatsudo1} for example.) 
For such a link, we can define a $\mathbb{Z}$-coloring as follows, which is a natural generalization of the Fox $n$-coloring. 

Let $L$ be a link and $D$ a regular diagram of $L$. 
We consider a map $\gamma$ from the set of the arcs of $D$ to $\mathbb{Z}$. 
If $\gamma$ satisfies the condition $2\gamma(a)= \gamma(b)+\gamma(c)$ at each crossing of $D$ with the over arc $a$ and the under arcs $b$ and $c$, then $\gamma$ is called a \textit{$\mathbb{Z}$-coloring} on $D$. 
A $\mathbb{Z}$-coloring which assigns the same integer to all the arcs of the diagram is called the \textit{trivial $\mathbb{Z}$-coloring}. 
A link is called \textit{$\mathbb{Z}$-colorable} if it has a diagram admitting a non-trivial $\mathbb{Z}$-coloring. 
Throughout this paper, we often call the integers appearing in the image of a $\mathbb{Z}$-coloring the {\it colors}. 

\medskip

For the Fox $n$-coloring, the minimal number of colors has been studied in details. 
Actually, the minimal numbers of colors for $n$-colorable knots and links behave interestingly, and are often hard to determine. 

On the other hand, for $\mathbb{Z}$-colorable links, the following was shown by the second author in \cite{Matsudo} and by Meiqiao Zhang, Xian’an Jin and Qingying Deng in \cite{ZhangJinDeng} independently, based on the result given in \cite{IchiharaMatsudo2}. 
 
\begin{fact}
The minimal coloring number of any non-splittable $\mathbb{Z}$-colorable link is equal to $4$.
\end{fact}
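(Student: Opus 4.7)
The plan is to split the Fact into its upper and lower bounds on the minimal coloring number. For the upper bound ``at most $4$'', I would simply invoke the construction of \cite{IchiharaMatsudo2}, which produces, for any $\mathbb{Z}$-colorable link, a diagram carrying a non-trivial $\mathbb{Z}$-coloring with exactly four colors. The substance of the Fact is the matching lower bound: on any diagram of a non-splittable $\mathbb{Z}$-colorable link, every non-trivial $\mathbb{Z}$-coloring uses at least four distinct values. I would establish this by analyzing what the local relation $2\gamma(a) = \gamma(b) + \gamma(c)$ forces when the image of $\gamma$ is small, using the non-splittability hypothesis only at the very end through a separation argument. The main obstacle is the three-color case; the two-color case is essentially immediate.

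For the two-color case, I would first note that any diagram $D$ of a non-splittable link $L$ is connected as a planar $4$-valent graph, since a disconnected diagram would separate $L$ into sublinks in disjoint disks. A quick case check of the relation $2\gamma(a) = \gamma(b) + \gamma(c)$ on an image of size two then shows that every crossing must be monochromatic, so $\gamma$ is constant along each strand through undercrossings and hence globally constant by connectedness of $D$, contradicting non-triviality.

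For the three-color case, I would assume $\gamma$ uses exactly three values $a < b < c$ and first extract an arithmetic-progression structure: some crossing is non-monochromatic, and at such a crossing the over color is the average of two distinct under colors, so the only possibility within $\{a,b,c\}$ is under arcs $\{a,c\}$ with over arc $b$, forcing $b = (a+c)/2$. A short enumeration then confirms that every non-trivial crossing of $D$ has over arc $b$ and under arcs $\{a,c\}$. The key consequence I would draw is that $b$ never occurs as an under arc of a non-trivial crossing, so the color along any strand cannot switch between $b$ and $\{a,c\}$. Thus each component of $L$ is either colored entirely by $b$ (type $B$) or colored within $\{a,c\}$ (type $AC$), and both classes are nonempty because all three colors occur.

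To close the argument, I would use a height isotopy. At each crossing between a type $B$ component and a type $AC$ component, the above analysis forces the type $B$ component to be the over strand. I would then lift all type $B$ components above the projection plane and push all type $AC$ components below, consistently with the over/under data at inter-type crossings, producing an embedded $2$-sphere in $S^3$ that separates the union of type $B$ components from the union of type $AC$ components. This contradicts the non-splittability of $L$, completing the lower bound. Combined with \cite{IchiharaMatsudo2} for the upper bound, this gives the claimed equality.
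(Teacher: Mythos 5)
The paper does not prove this Fact at all: it is quoted as a known theorem, attributed to \cite{Matsudo} and \cite{ZhangJinDeng} (building on \cite{IchiharaMatsudo2}), so there is no internal proof to compare against. Judged on its own terms, the lower-bound half of your proposal is correct and is essentially the argument of \cite{IchiharaMatsudo2}. The two-color case is fine: non-splittability forces every diagram to be connected, the relation $2\gamma(a)=\gamma(b)+\gamma(c)$ with a two-element image forces every crossing to be monochromatic, and connectedness then propagates a single color everywhere. The three-color case is also sound: the only admissible non-monochromatic crossing has under colors $\{a,c\}$ and over color $b=(a+c)/2$; since the two under arcs at a crossing are consecutive arcs of one component, each component is monochromatically $b$ or colored within $\{a,c\}$, both types occur, and at every inter-type crossing the type-$B$ strand must be the over strand (a type-$AC$ over strand above a type-$B$ under strand would force $a=b$ or $c=b$). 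Lifting the type-$B$ sublink above the plane then splits the link, a contradiction.

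The genuine gap is the upper bound, which you dismiss in one sentence by ``simply invoking the construction of \cite{IchiharaMatsudo2}.'' That reference does not contain such a construction; the statement that \emph{every} $\mathbb{Z}$-colorable link admits a diagram with a four-color non-trivial $\mathbb{Z}$-coloring is precisely the main theorem of \cite{Matsudo} and \cite{ZhangJinDeng}, proved there by a nontrivial algorithmic modification of a given colored diagram (the present paper emphasizes that the resulting diagrams are ``often very complicated''). Your framing that the lower bound is ``the substance of the Fact'' inverts the actual difficulty: the lower bound is the short case analysis you gave, while the upper bound is the part that required two independent papers. As written, your proposal proves the inequality $\geq 4$ but does not prove, and mis-cites the source of, the inequality $\leq 4$, so it does not establish the stated equality.
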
 

Here the \textit{minimal coloring number} of a diagram $D$ of a $\mathbb{Z}$-colorable link $L$ 
is defined as the minimal number of the colors for all non-trivial $\mathbb{Z}$-colorings on $D$, 
and the \textit{minimal coloring number} of $L$ is defined as the minimum of 
the minimal coloring numbers of diagrams representing the link $L$. 
Note that the minimal coloring number of any splittable $\mathbb{Z}$-colorable link is equal to $2$.

\medskip

Now we remark that the proofs in both \cite{Matsudo} and \cite{ZhangJinDeng} are quite algorithmic. 
Therefore the obtained diagrams in the proofs admitting a $\mathbb{Z}$-coloring with four colors are often very complicated. 

In view of this, in this paper, we consider  the minimal coloring numbers of \textit{minimal diagrams} of $\mathbb{Z}$-colorable links,  that is, the diagrams representing the link with least number of crossings. 
Remark that there are many minimal diagrams for each link in general. 

First we show the following. 

\begin{theorem}\label{thm0}
For any positive integer $N$, there exists a non-splittable $\mathbb{Z}$-colorable link with a minimal diagram admitting only $\mathbb{Z}$-colorings with at least $N$ colors. 
\end{theorem}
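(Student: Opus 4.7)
The plan is to construct, for each positive integer $N$, an explicit non-split $\mathbb{Z}$-colorable link $L_N$ together with a particular diagram $D_N$ whose every non-trivial $\mathbb{Z}$-coloring uses at least $N$ distinct colors, and then to prove that $D_N$ is a minimal diagram of $L_N$. A natural candidate family is a pretzel-type or chain-type link carrying a long sequence of alternating twist regions; the essential feature is that the $\mathbb{Z}$-coloring relations propagate through these twist regions in an arithmetic-progression-like fashion, so the coloring values are forced to span a long range of integers that grows with $N$.

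First I would verify that $D_N$ is a minimal diagram. The cleanest route is to arrange $D_N$ to be reduced and alternating, in which case the Kauffman--Murasugi--Thistlethwaite theorem guarantees that the crossing number of $L_N$ equals the crossing number of $D_N$, so $D_N$ is automatically minimal. If the chosen family ends up non-alternating, an alternative is to bound the crossing number from below using the breadth of the Jones or HOMFLY polynomial. Non-splittability should fall out either from Menasco's theorem (connected reduced alternating diagrams represent non-split links) or from a direct geometric argument on the construction.

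Next I would analyze every $\mathbb{Z}$-coloring on $D_N$. Writing the relation $2\gamma(a) = \gamma(b) + \gamma(c)$ at each crossing and eliminating variables, the target is to show that, modulo the trivial (constant) $\mathbb{Z}$-colorings, the $\mathbb{Z}$-coloring module of $D_N$ is cyclic, generated by a single explicit coloring $\gamma_N$ that attains at least $N$ distinct values. Since the affine transformation $\gamma \mapsto a\gamma + b$ with $a \neq 0$ preserves the cardinality of the color set, this would immediately imply that every non-trivial $\mathbb{Z}$-coloring on $D_N$ uses at least $N$ colors.

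The main obstacle is showing that the $\mathbb{Z}$-coloring module of $D_N$ really is cyclic modulo constants; a priori there could be exotic colorings not of the form $a\gamma_N + b$ that use fewer colors. Overcoming this likely requires a direct rank computation on the coloring matrix of $D_N$, or an inductive argument that peels off one twist region at a time and tracks the remaining degrees of freedom. A related subtlety is calibrating the construction so that two different goals are achieved simultaneously: the diagram must be rigid enough that its $\mathbb{Z}$-coloring module has rank one modulo constants, yet the single generating coloring must force at least $N$ distinct integer values to appear. Balancing these two constraints, rather than checking either one in isolation, is where I expect the real work to lie.
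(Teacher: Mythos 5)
Your outline matches the paper's proof in all essentials: the paper takes the pretzel links $P(n,-n,\ldots,n,-n)$ (even $n$, at least four strands), whose standard diagram is minimal by the Lickorish--Thistlethwaite results on crossing numbers (these diagrams are not alternating, so your Jones-polynomial fallback is the route actually needed rather than Kauffman--Murasugi--Thistlethwaite), and shows that after normalizing the minimum color to $0$ the twist regions force every non-trivial coloring to take exactly the values $0, a, 2a, \ldots, (n+1)a$, i.e., the coloring space is rank one modulo constants exactly as you predict. The one piece of real work you defer --- verifying that the coloring module has no exotic low-color elements --- is done in the paper by propagating the arithmetic progressions around the diagram arc by arc, and it closes up consistently, so your plan is sound.
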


In fact, the examples are given by families of pretzel links; $P( n, -n, n, -n, \cdots , n , -n)$ with at least 4 strands, $P(-n, n+1, n(n+1))$ with a positive integer $n$. 
These will be treated in Section 2. 

On the other hand, by considering some particular subfamily, as a corollary, we have the following. 

\begin{corollary}\label{cor1}
There exists an infinite family of $\mathbb{Z}$-colorable pretzel links each of which has a minimal diagram admitting a $\mathbb{Z}$-coloring with only four colors. 
\end{corollary}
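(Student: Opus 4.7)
The plan is to exhibit an explicit infinite subfamily of the pretzel links featured in Theorem~\ref{thm0}, for instance within $P(-n, n+1, n(n+1))$, on whose standard pretzel diagrams one can write down a $\mathbb{Z}$-coloring using only four colors. Once such a four-color coloring is produced on a diagram that is also minimal, the Fact quoted above forces four to be the minimal coloring number of that diagram, yielding the corollary.

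I would first recall that $P(-n, n+1, n(n+1))$ is $\mathbb{Z}$-colorable since its determinant vanishes, by the identity $(-n)(n+1) + (n+1)\cdot n(n+1) + n(n+1)\cdot(-n) = 0$. Labeling the top boundary arcs of each of the three twist tangles in the standard diagram, the arc colors along each twist region form an arithmetic progression determined by the two boundary colors; the pretzel closure relations then give a linear system whose non-trivial integer solutions describe all $\mathbb{Z}$-colorings. I would write down such a solution explicitly and then select the parameters so that the union of the three arithmetic progressions has cardinality exactly four. This selection should be possible along an infinite set of $n$, giving the desired infinite family.

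Next I would verify that the standard pretzel diagram of the chosen subfamily is a minimal diagram. One approach is a Kauffman bracket or Jones polynomial computation yielding a lower bound on the crossing number matching the $n + (n+1) + n(n+1) = n^2 + 3n + 1$ crossings of the standard diagram. Alternatively, known results on the crossing number of three-strand pretzel links may apply directly to the subfamily in question.

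The main obstacle is meeting the two requirements simultaneously: the four-color condition imposes strong arithmetic constraints linking the chosen boundary values to $n$, while the minimality verification is delicate because the entries of the pretzel have mixed signs, so the standard diagram is not alternating and the Kauffman--Murasugi--Thistlethwaite theorem cannot be applied directly. Arranging both conditions along an infinite subfamily of $n$ is the crux of the argument.
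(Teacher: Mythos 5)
Your overall strategy---pick an infinite subfamily from Theorem~\ref{thm0}, produce a four-color $\mathbb{Z}$-coloring on a minimal diagram, and conclude---has the right shape, but the specific family you propose to work inside, $P(-n, n+1, n(n+1))$, cannot deliver the result. Theorem~\ref{thm3} shows that on the standard (minimal) diagram of $P(-n,n+1,n(n+1))$ \emph{every} non-trivial $\mathbb{Z}$-coloring uses exactly $n^2+n+3 \ge 9$ colors: the boundary colors of the three twist regions are forced, up to an overall scalar, by the closure relations (one finds $(a,b)=((n+1)t,\, nt)$), so there is no freedom to ``select the parameters so that the union of the three arithmetic progressions has cardinality exactly four.'' The arithmetic constraints you hope to arrange are not adjustable; they are determined by $n$ and always yield $n^2+n+3$ distinct colors. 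So this branch of your plan fails for every $n \ge 2$.

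The family that works, and the one the paper uses, is the other one from Theorem~\ref{thm0}: take $P(2,-2,2,-2,\dots,2,-2)$ with the number of strands ranging over even integers at least $4$. By Theorem~\ref{thm2} with $n=2$, the standard pretzel diagram is minimal (via Lickorish--Thistlethwaite, which addresses your worry about the diagram being non-alternating) and every non-trivial $\mathbb{Z}$-coloring on it uses exactly $n+2=4$ colors, namely $\{0,1,2,3\}$ after normalization. Since the number of strands is a free parameter, this gives an infinite family, and no appeal to the quoted Fact is needed: the explicit coloring in Figure~\ref{p4} already exhibits four colors on a minimal diagram.
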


Also such examples are given by $\mathbb{Z}$-colorable torus links as follows. 

\begin{theorem}\label{thm1}
For even integer $n>2$ and non-zero integer $p$, 
the torus link $T(pn,n)$ has a minimal diagram admitting a $\mathbb{Z}$-coloring with only four colors. 
\end{theorem}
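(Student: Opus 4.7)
The plan is to exhibit an explicit $\mathbb{Z}$-coloring using only four colors on a minimal diagram of $T(pn,n)$. Without loss of generality I assume $p>0$, since mirroring preserves both the crossing number and the palette. By Murasugi's crossing-number formula for torus links, $c(T(pn,n))=pn(n-1)$ is realised by the closure of the braid $\delta^{pn}$ on $n$ strands, where $\delta=\sigma_1\sigma_2\cdots\sigma_{n-1}$. I would work with this closed braid diagram throughout.

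A direct application of the Fox coloring rule shows that if the arcs at the top of the braid are labelled $x_1,\dots,x_n$ from left to right, then $\delta$ acts linearly as
\[
(x_1,\dots,x_n)\longmapsto(2x_1-x_2,\,2x_1-x_3,\,\dots,\,2x_1-x_n,\,x_1).
\]
A short induction (using that $n$ is even) then shows that
\[
\delta^n(x_1,\dots,x_n)=(x_1,\dots,x_n)+2s\cdot(1,1,\dots,1),\qquad s:=x_1-x_2+x_3-\cdots-x_n.
\]
Consequently $\delta^{pn}=(\delta^n)^p$ fixes any top labelling with $s=0$, so every such labelling extends to a $\mathbb{Z}$-coloring of the closed braid diagram.

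I would then take the initial coloring $v=(1,1,0,0,\dots,0)$, which has $s=0$, and trace its evolution inside one block $\delta^n$. The first $\delta$ produces the transition vector $(1,2,2,\dots,2,1)$; the second $\delta$ takes this to $(0,\dots,0,1,1)$; and each of the remaining $n-2$ applications of $\delta$ shifts a consecutive pair one position to the left while alternating its sign, yielding in order $(0,\dots,0,-1,-1,0)$, $(0,\dots,0,1,1,0,0),\ldots$, and finally returning to $(1,1,0,\dots,0)=v$. All intermediate arcs inside each $\delta$-block also take values in $\{-1,0,1,2\}$, by direct inspection: inside a pair-shifting block only two consecutive crossings act non-trivially, and they produce entries in $\{-1,0,1\}$. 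Since $\delta^{pn}=(\delta^n)^p$ just repeats this cycle $p$ times, the whole diagram is coloured using exactly the palette $\{-1,0,1,2\}$, giving a non-trivial $\mathbb{Z}$-coloring with four colors.

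The main obstacle is the book-keeping in the last step: formalising the pair-shifting pattern uniformly for all even $n>2$ and verifying that no colors outside $\{-1,0,1,2\}$ are introduced inside any block. This reduces to a short induction on the position of the shifting pair, using the elementary fact that at the two active crossings of a pair-shifting block, applying the relation $c=2a-b$ to entries $a,b\in\{-1,0,1\}$ never leaves the set $\{-1,0,1\}$.
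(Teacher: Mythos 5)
Your proof is correct and follows essentially the same route as the paper's: both color the standard minimal closed-braid diagram of $T(pn,n)$ by iterating the linear transition map of one full twist $\delta=\sigma_1\cdots\sigma_{n-1}$ and verifying that an explicit seed vector returns to itself after $n$ steps while only four colors appear on all arcs. The differences are cosmetic --- your seed $(1,1,0,\dots,0)$ and palette $\{-1,0,1,2\}$ versus the paper's $(1,0,\dots,0,1)$ and $\{0,1,2,3\}$ --- though your invariant $s$ and the identity $\delta^n=\mathrm{id}+2s\cdot(1,\dots,1)$ make the closure condition more transparent than the paper's direct computation.
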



\section{Pretzel links}\label{sec:pretzel1}

In this section, we first prove the next theorem. 

\begin{theorem}\label{thm2}
For an even integer $n\geq 2$, the pretzel link $P(n, -n, \cdots, n, -n)$ with at least $4$ strands has a minimal diagram admitting only $\mathbb{Z}$-colorings with $n+2$ colors. 
\end{theorem}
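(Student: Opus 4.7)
The plan is to work with the standard pretzel diagram $D$ of $L = P(n,-n,n,-n,\ldots,n,-n)$ with $2k\geq 4$ tangles, so that $D$ has $2kn$ crossings. Two things need to be shown: that $D$ realizes the crossing number of $L$, and that every non-trivial $\mathbb{Z}$-coloring on $D$ uses exactly $n+2$ colors.

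I begin with the coloring count. Label the top external arcs joining adjacent tangles by $t_1,\ldots,t_{2k}$, with $t_i$ running from the right of tangle $i$ to the left of tangle $i+1$ (indices modulo $2k$), and the bottom external arcs similarly by $b_1,\ldots,b_{2k}$. A straightforward induction along a vertical twist region of $n_i\in\{+n,-n\}$ half-twists, with top colors $\alpha$ (left) and $\beta$ (right), yields bottom colors $\alpha+n_i(\alpha-\beta)$ and $\beta+n_i(\alpha-\beta)$, and shows that the $n+2$ arcs of $D$ lying in that tangle take pairwise distinct values forming an arithmetic progression of common difference $\pm(\alpha-\beta)$. Equating the two expressions for $b_i$ obtained from tangles $i$ and $i+1$ yields the compatibility relation $n_i(t_{i-1}-t_i)=n_{i+1}(t_i-t_{i+1})$; since the signs $n_i$ alternate as $\pm n$ along the pretzel, this collapses to $t_{i+1}=t_{i-1}$ for every $i$. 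So $t_1=t_3=\cdots=:p$ and $t_2=t_4=\cdots=:q$, the coloring is non-trivial precisely when $p\neq q$, and the total set of colors used in $D$ is then $\{p+j(q-p):0\leq j\leq n+1\}$, of cardinality $n+2$.

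For minimality of $D$, I would invoke the standard crossing-number results for pretzel/Montesinos links: since every integral tangle of $D$ has $|n|\geq 2$ crossings and the tangles are assembled in reduced Montesinos form, $D$ realizes $\mathrm{cr}(L)=2kn$. An alternative, more self-contained route is to verify that $D$ is both $+$-adequate and $-$-adequate --- each individual twist region is a reduced alternating tangle, and the sign alternation between neighbouring tangles keeps the $A$-state and $B$-state circles disjoint --- so that by Lickorish--Thistlethwaite the span of the Kauffman bracket of $D$ equals $2kn$ and is a lower bound for the crossing number. This minimality step is the main obstacle; once it is in place, the coloring portion is automatic, and we conclude that the minimal coloring number of the minimal diagram $D$ is exactly $n+2$.
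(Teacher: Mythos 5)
Your proof is correct and follows essentially the same route as the paper: the same standard pretzel diagram, minimality via the Lickorish--Thistlethwaite adequacy results, and the arithmetic-progression behaviour of colors along each twist region, matched up at the arcs joining adjacent tangles. The only real difference is that you solve for all colorings directly in terms of the boundary colors $t_i$ (obtaining the two-parameter family $(p,q)$ and the color set $\{p+j(q-p) : 0\le j\le n+1\}$), whereas the paper first normalizes the coloring, via a lemma from an earlier paper of the authors, so that the minimum color $0$ sits on a prescribed arc and then propagates; both arguments yield exactly $n+2$ colors for every non-trivial coloring.
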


Here a \textit{pretzel link} $P(a_1, \cdots , a_n)$ is defined as a link admitting a diagram consisting rational tangles corresponding to $1/a_1, 1/a_2,\cdots,1/a_n$ located in line. 
See Figure~\ref{p1} for example. 

Since such a pretzel link $P(n, -n, \cdots, n, -n)$ is known to be non-splittable if $n \ge 2$ and the number of strands is at least 4, Theorem~\ref{thm0} immediately follows from Theorem~\ref{thm2}. 

\begin{proof}[Proof of Theorem~\ref{thm2}]

The pretzel link $P(n, -n, \cdots, n, -n)$ is $\mathbb{Z}$-colorable since its determinant is 0 for the link. 
See \cite{DasbachFuterKalfagianniLinStoltzfus} for example. 
Also, in \cite{IchiharaMatsudo2}, an example of a $\mathbb{Z}$-coloring for the link is given. 

Let $D$ be the diagram of $P(n, -n, \cdots, n, -n)$ illustrated in Figure~\ref{p1}. 
This diagram $D$ is a minimal diagram of the link due to the result in \cite{LickorishThistlethwaite}. 
We set the labels $x_1, x_2, \cdots$ of the arcs of $D$ as shown in Figure~\ref{p1}. 
Remark that some of the arcs are labelled in duplicate. 

\begin{figure}[htb]
\begin{center}
\includegraphics[height=5cm,clip]{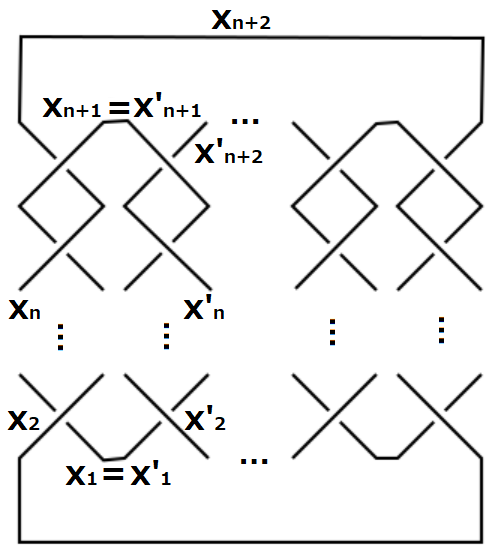}
\caption{}\label{p1}
\end{center}
\end{figure}

Suppose that a non-trivial $\mathbb{Z}$-coloring $\gamma$ is given on $D$.
As shown in \cite{IchiharaMatsudo2}, without changing the number of the colors, 
we may assume that the minimum of the colors for $\gamma$ is $0$, and 
the arcs colored by $0$ cannot cross over the arcs colored by the other colors. 
Thus, due to symmetry, we can set the color $\gamma(x_1)$ of the arc $x_1$ as $0$. 
Also the color $\gamma(x_2)$ of the arc $x_2$ is set as $a>0$, since $\gamma$ is assumed to be non-trivial. 

Here recall that the colors of the arcs under a $\mathbb{Z}$-coloring around a twist, 
i.e., a sequence of bigons in $D$, 
appear as an arithmetic sequence as depicted in Figure~\ref{p2}.

\begin{figure}[htb]
\begin{center}
\includegraphics[height=1.5cm,clip]{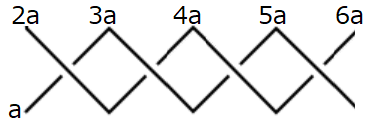}
\caption{}\label{p2}
\end{center}
\end{figure}

Thus, for the arcs $x_1, x_2, \cdots, x_n, x_{n+1}, x_{n+2}$ in $D$, 
we see that $\gamma (x_i) = (i-1)a$ for $ 1 \le i \le n+2$. 
In particular, $\gamma( x_{n+1}) =na$ and $\gamma(x_{n+2}) =(n+1)a$. 

Also, for $x'_1(=x_1), x'_2, \cdots, x'_n, x'_{n+1}(=x_{n+1}), x'_{n+2}$, 
since $\gamma(x'_1) = \gamma (x_1) = 0$ and $\gamma( x'_{n+1} ) = \gamma (x_{n+1}) =na$, 
we see that $\gamma( x'_j ) = (j-1)a$ for $1 \le j \le n+2$. 
In particular, $\gamma( x_{n+1}) =\gamma( x'_{n+1}) =na$. 

In the same way, we can determine all the colors of the arcs on $D$ under $\gamma$. 
It then follows that $D$ is colored by $\gamma$ as illustrated by Figure \ref{p3}. 

\begin{figure}[htb]
\begin{center}
\includegraphics[height=5cm,clip]{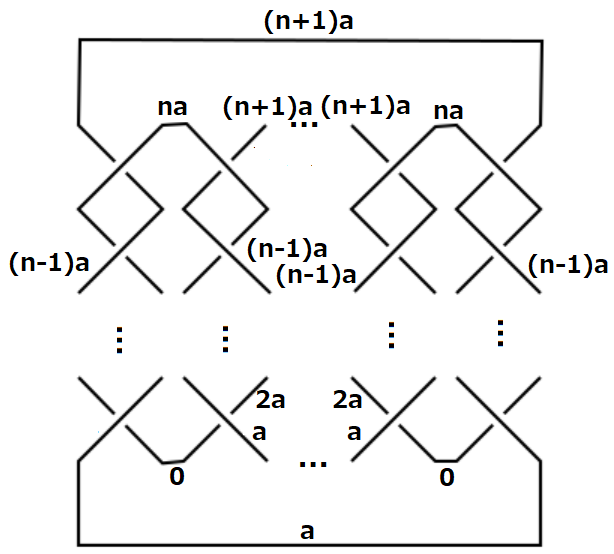}
\caption{}\label{p3}
\end{center}
\end{figure}

As shown in the figure, the colors of $\gamma$ are $0, a, 2a, \cdots, (n+1)a$. 
That is, the minimal coloring number of $D$ is equal to $n+2$.

\end{proof}
\begin{proof}[Proof of Corollary~\ref{cor1}]
We consider the pretzel link $P(2,-2,2,-2,\cdots,2,-2)$. 
The diagram depicted in Figure~\ref{p4} is a minimal diagram by \cite{LickorishThistlethwaite}. 
On the other hand, the $\mathbb{Z}$-coloring given in the figure has only four colors $\{ 0,1,2,3 \}$. 

\begin{figure}[htb]
\begin{center}
\includegraphics[height=3cm,clip]{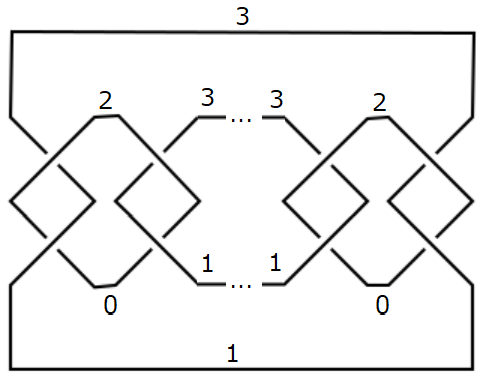}
\caption{}\label{p4}
\end{center}
\end{figure}

\end{proof}


Next we consider the pretzel link $P(-n, n+1, n(n+1))$ for an integer $n \ge  2$, and show the following.

\begin{theorem}\label{thm3}
For an integer $n \ge 2$, 
the pretzel link $P(-n, n+1, n(n+1))$ has a minimal diagram admitting only $\mathbb{Z}$-colorings with $n^2+n+3$ colors. 
\end{theorem}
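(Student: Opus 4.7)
The plan is to mimic the proof of Theorem~2.1 applied to the standard pretzel diagram $D$ of $P(-n,n+1,n(n+1))$. This diagram has $n+(n+1)+n(n+1)=n^{2}+3n+1$ crossings, and since every $|a_{i}|\ge 2$ the same Lickorish--Thistlethwaite result cited in the proof of Theorem~2.1 shows that $D$ is minimal; the link is $\mathbb{Z}$-colorable because
\[
\det P(-n,n+1,n(n+1))=|n(n+1)(-1+(n+1)-n)|=0.
\]

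Fix a non-trivial $\mathbb{Z}$-coloring $\gamma$ on $D$ and let $\alpha_{1},\alpha_{2},\alpha_{3}$ denote the colors of the three top arcs: $\alpha_{i+1}$ is the arc shared at the top between columns $i$ and $i+1$, while the outer top wrap-around identifies the top-left of column~1 with the top-right of column~3 as the single arc $\alpha_{1}$. Applying the arithmetic-progression rule from the proof of Theorem~2.1 inside each twist column and writing out the four closure conditions at the bottom of $D$ (three internal identifications plus the outer bottom wrap-around), every one of those equations collapses to the single linear relation
\[
\alpha_{2}=-n\,\alpha_{1}+(n+1)\,\alpha_{3}.
\]
Thus the space of $\mathbb{Z}$-colorings on $D$ is two-dimensional, and the coloring is non-trivial if and only if $\alpha_{1}\ne\alpha_{3}$. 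After a global shift we may take $\alpha_{1}=0$ and $\alpha_{3}=c$ with $c\ne 0$, so that $\alpha_{2}=(n+1)c$.

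Now read off the colors column by column. Column~3, with $n(n+1)$ crossings and top colors $c,0$, contributes the set $\{kc:k=-n(n+1),\ldots,0,1\}$, a block of $n^{2}+n+2$ consecutive multiples of $c$. Column~1, with $n$ crossings and top colors $0,(n+1)c$, contributes $\{k(n+1)c:k=-n,\ldots,0,1\}$; every element except the top value $(n+1)c$ is a multiple of $c$ in $[-n(n+1)c,0]$ and so already belongs to the column-3 set, so column~1 adds only the single new color $(n+1)c$. Column~2, with $n+1$ crossings and top colors $(n+1)c,c$, contributes $\{(n+1)c\}\cup\{(1-jn)c:j=0,\ldots,n+1\}$; the first element is the outlier just counted, and each $1-jn$ lies in $[1-n(n+1),1]\subset[-n(n+1),1]$, so these are absorbed by column~3. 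The union is therefore $\{kc:k=-n(n+1),\ldots,0,1\}\cup\{(n+1)c\}$, of size exactly $(n^{2}+n+2)+1=n^{2}+n+3$.

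Since this count is independent of the choice of the nonzero integer $c$ and of the overall shift, every non-trivial $\mathbb{Z}$-coloring on $D$ uses exactly $n^{2}+n+3$ colors, proving the theorem. The main obstacle in carrying out this plan is the bookkeeping in the third paragraph: one has to verify carefully that the long arithmetic progression of column~3 absorbs all colors contributed by the other two columns except for the single outlier $(n+1)c$. The derivation of the linear relation on the $\alpha_{i}$ and the invocation of Lickorish--Thistlethwaite for the minimality of $D$ are direct analogues of the arguments in the proof of Theorem~2.1.
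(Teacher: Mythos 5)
Your proposal is correct and follows essentially the same route as the paper: minimality of the standard pretzel diagram via Lickorish--Thistlethwaite, propagation of arithmetic progressions through the three twist regions, and your relation $\alpha_2=-n\alpha_1+(n+1)\alpha_3$ is exactly the paper's condition $na=(n+1)b$ written in terms of the top arcs, with your final color set $\{kc: -n(n+1)\le k\le 1\}\cup\{(n+1)c\}$ agreeing with the paper's $\{0\}\cup\{n,n+1,\dots,(n+1)^2\}$ up to an affine change of the coloring. The only (cosmetic) differences are that you normalize by an arbitrary shift instead of invoking the Ichihara--Matsudo lemma placing the color $0$ on an arc that never crosses over others, and that the paper explicitly verifies the bottom compatibility equations that you assert ``collapse'' to the single linear relation.
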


Such pretzel links are all $\mathbb{Z}$-colorable by \cite{DasbachFuterKalfagianniLinStoltzfus} for example. 
In fact, the determinant of  the link $P(-n, n+1, n(n+1))$  is calculated as 
$| (-n) \cdot (n+1) + (-n) \cdot n(n+1) + (n+1)\cdot n(n+1) | = 0 $.  


\begin{proof}[Proof of Theorem~\ref{thm3}]
Let $D$ be the diagram illustrated in Figure \ref{p2-1}. 
It is a minimal diagram of $P(-n, n+1, n(n+1))$ as in the previous cases. 
We set the labels $x_1, \cdots, x_{n+2}, x'_1, \cdots, x'_{n+3}, y_1 , \cdots, y_{n^2+n+2}$ of the arcs of $D$ as shown in Figure~\ref{p2-1}. 
Remark that some of the arcs are labelled in duplicate. 

\begin{figure}[htb]
\begin{center}
\includegraphics[height=5cm,clip]{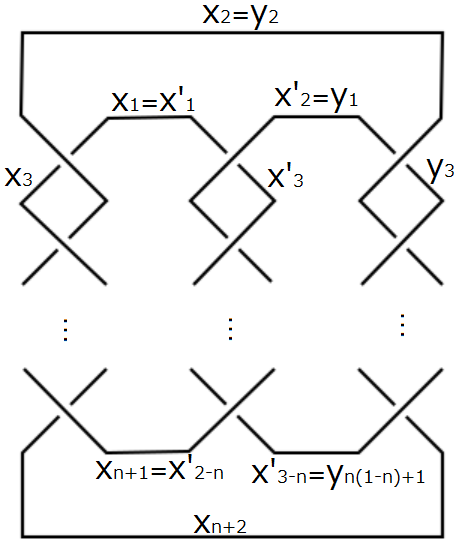}
\caption{}\label{p2-1}
\end{center}
\end{figure}

Suppose that a non-trivial $\mathbb{Z}$-coloring $\gamma$ on $D$ is given. 
As shown in \cite{IchiharaMatsudo2}, without changing the number of the colors, 
we may assume that the minimum of the colors for $\gamma$ is $0$, and 
the arcs colored by $0$ cannot cross over the arcs colored by the other colors. 
Thus, on the diagram $D$, there are only two arcs $x_1$ and $x_{n+2}$ which can be colored by 0. 
We here assume that the color of the arc $x_1$ is $0$, 
since the case that the arc $x_{n+2}$ is colored by $0$ can be treated in a very similar way. 

Set the color of  $x_2$ as $a$ and the color of  $x'_2$ as $b$ in Figure~\ref{p2-2}. 
Note that both $a$ and $b$ have to be non-zero, for $\gamma$ is assumed to be non-trivial. 

Then, for $x_1, x_2, \cdots, x_n, x_{n+1}, x_{n+2}$, 
we see from Figure~\ref{p2} that $\gamma(x_i ) = (i-1)a$ for $ 1 \le i \le n+2$. 
Also, for $x'_1(=x_1), x'_2, \cdots, x'_{n+2} (=x_{n+1}), x'_{n+3}$, 
we see that $\gamma(x'_j) =(j-1)b$ for $ 1 \le j \le n+3$. 
Since $x_{n+1}$ and $x'_{n+2}$ express the same arc, $n a = (n+1) b$ must hold. 
Thus we obtain that $(a, b)=( (n+1) t, n t )$ with some positive integer $t$, for $n$ and $n+1$ are co-prime if $n \ge 2$. 

Now, the color of $y_1=x'_2$ is $b = n t$ and the color of $y_2 = x_2$ is $(n+1)t$. 
It follows that, for the arcs $y_1(=x'_2), y_2(=x_2), y_3, \cdots, y_{n(n+1)+1} (= x'_{n+3}), y_{n(n+1)+2} (=x_{n+2})$, 
we see that $\gamma(y_k) =(n+k-1) t$ for $ 1\le k \le n(n+1)+2$. 

We need to check the compatibility of the colors for the arcs labelled in duplicate. 
The colors of $y_{n(n+1)+1}$ is $ ( n + ( n ( n+1) + 1) -1 ) t = n(n+2)t$, which is equal to the color $((n+3)-1)b $ of $x'_{n+3}$. 
The colors of $y_{n(n+1)+2}$ is $ ( n + ( n ( n+1) + 2) -1 ) t = (n+1)^2 t$, which is equal to the color $((n+2)-1)a $ of $x_{n+2}$. 

Consequently, after dividing all the colors by $t$, $\gamma$ is illustrated by Figure~\ref{p2-2}. 
The colors appearing there are 
$\{ 0 , n+1 , 2(n+1), \cdots, n ( n+1), (n+1)^2\} 
\cup
\{ 0 , n, 2n, \cdots, (n+1)n, (n+2)n \} 
\cup
\{ n, n+1, n+2, \cdots , n + n(n+1), n + n(n+1)+1 = (n+1)^2 \}$. 

Note that 
$\{ n+1 , 2(n+1), \cdots, n ( n+1), (n+1)^2\} 
\subset 
\{ n, n+1, n+2, \cdots , n + n(n+1), n + n(n+1)+1 = (n+1)^2 \}$ and 
$\{ n, 2n, \cdots, (n+1)n, (n+2)n \} 
\subset 
\{ n, n+1, n+2, \cdots , n + n(n+1) = (n+2)n, n + n(n+1)+1 = (n+1)^2 \}$. 

Therefore, there are only mutually distinct colors 
$\{ 0 , n, n+1, n+2, n+3, \cdots, n + n(n+1), n + n(n+1)+1 = (n+1)^2 \}$, 
and so, the minimal coloring number of $D$ is $2 + n(n+1)+1 = n^2+n+3$. 
\end{proof}

\begin{figure}[htb]
\begin{center}
\includegraphics[height=5cm,clip]{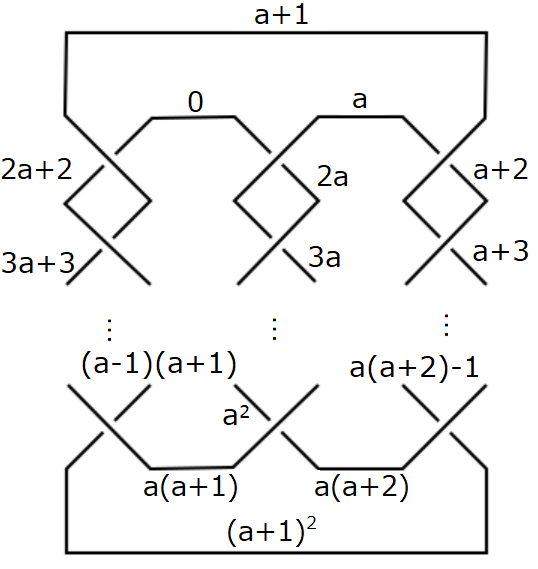}
\caption{}\label{p2-2}
\end{center}
\end{figure}


\section{Torus links}\label{sec:torus}

In this section, we consider \textit{torus links}, that is, the links which can be isotoped onto the standardly embedded torus in the 3-space. 
By $T(a,b)$, we mean the torus link running $a$ times meridionally and $b$ times longitudinally.


\begin{proof}[Proof of Theorem~\ref{thm1}]
Let $D$ be the standard diagram of $T(pn,n)$ illustrated by Figure~\ref{torus}. 
This diagram $D$ has the least number of crossings for the torus link, that is well-known. 
See \cite{Kawauchi} for example. 

\begin{figure}[htb]
\begin{center}
\includegraphics[height=3cm,clip]{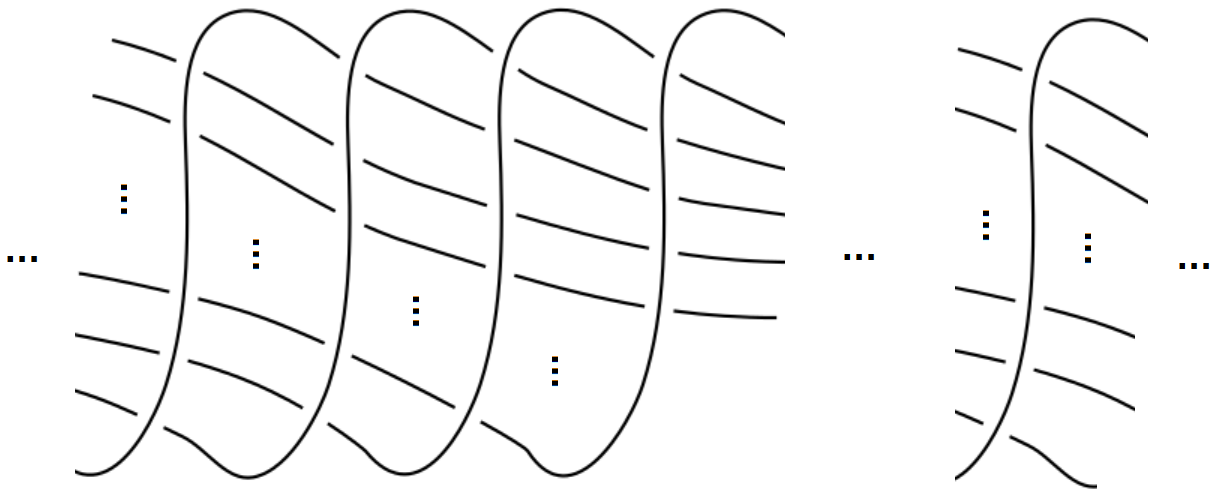}
\caption{}\label{torus}
\end{center}
\end{figure}

In the following, we will find a $\mathbb{Z}$-coloring on $D$ by assigning colors on the arcs of $D$. 

Note that the link has $n$ components running horizontally with $p$ times twistings as shown in $D$. 
In a local view, we see $n$ horizontal parallel arcs in $D$. 
There are $pn$ such sets of parallel arcs in $D$. 
See Figure~\ref{torus1}.

\begin{figure}[htb]
\begin{center}
\includegraphics[height=3cm,clip]{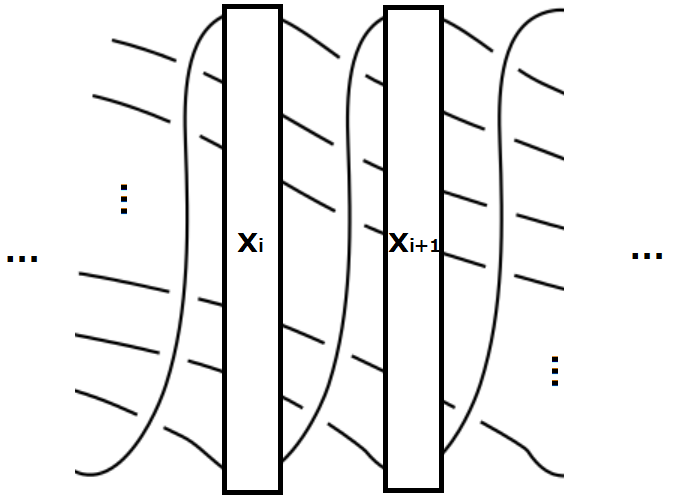}
\caption{}\label{torus1}
\end{center}
\end{figure}

Take a set of such parallel arcs, and assign the colors as $X_1=^t\!\!(1, 0, \cdots, 0, 1)$. 

Then, by considering the condition of the $\mathbb{Z}$-coloring, we can find a matrix 
\[
  A = \left(
    \begin{array}{ccccc}
      0 & \cdots & \cdots & 0 & 1 \\
      -1 & 0 & \cdots & 0 & 2 \\
      0 &  \ddots & \ddots & \vdots & 2 \\
      \vdots & \ddots & \ddots & 0 & \vdots \\
      0 & \cdots & 0 & -1 & 2
    \end{array}
  \right)
\]
such that $A X_1$ gives a set of colors for the right adjacent set of parallel arcs to the prescribed one. 
Set $A X_1$ as $X_2$ and repeat this procedure. 
In fact, we obtain $X_2=AX_1=^t\!\!(1, 1, 2, \cdots, 2)$. 

In the same way, we have $X_4=A^2X_2=^t\!\!(2, 2, 1, 1, 2, \cdots, 2)$ 
as the colors for the 4th set of parallel arcs from the first one. 
By repeating further, we obtain that $X_n=(A^2)^{(n-2)/2}X_2=^t\!\!(2, \cdots, 2, 1, 1)$, since $n$ is even. 
Moreover we see $X_{n+1}=AX_n=^t\!\!(1, 0, \cdots, 0, 1)$. 
That is, $X_1=X_{n+1}$ holds. 

Since there are $pn$ such sets of parallel arcs in $D$, this implies that an appropriate set of colors on all the arcs in $D$, equivalently, a $\mathbb{Z}$-coloring on $D$, can be found. 

Furthermore, we have $X_3=AX_2=^t\!\!(2, 3, 3, \cdots, 2)$, 
$X_5=A^2X_3=^t\!\!(2, 2, 2, 3, 3, 2, \cdots, 2)$, 
and by repeating further, we obtain $X_{n-1}=(A^2)^{(n-2)/2}X_3=^t\!\!(2, \cdots, 2, 3, 3, 2)$.

It concludes that the colors of this $\mathbb{Z}$-coloring is $\{0, 1, 2, 3\}$, that is, the $\mathbb{Z}$-coloring is represented by four colors.
\end{proof}


\section*{Acknowledgement}
The authors would like to thank to the anonymous referee for his/her careful reading and pointing out a gap in our proof of Theorem~\ref{thm3} for the previous version of the paper.


\end{document}